\newtheorem{theorem}{Theorem}[section]
\newtheorem{corollary}[theorem]{Corollary}
\newtheorem{prop}[theorem]{Proposition}
\theoremstyle{definition}
\theoremstyle{remark}
\newcommand{\id}{\textrm{id}}
\title[Almost product manifolds with a circulant structure]{Riemannian almost product manifolds generated by a circulant structure}
\author{Dimitar Razpopov}
\address{Dimitar Razpopov \\ Department of
Mathematics, Informatics and Physics\\ Faculty of Economics, University of Agriculture\\ 12 Mendeleev blvd, 4000 Plovdiv, Bulgaria \\}
\email{razpopov@au-plovdiv.bg}
\author{Dobrinka Gribacheva}
\address{Dobrinka Gribacheva\\Department of Algebra and Geometry\\ Faculty of Mathematics and Informatics, \\University of Plovdiv Paisii Hilendarski\\ 24 Tzar Asen, 4000 Plovdiv, Bulgaria \\}
\email{dobrinka@uni-plovdiv.bg}
\subjclass[2010]{Primary: 53B20, 53C15, Secondary: 15B05}
\keywords{Riemannian metric, circulant matrix, almost product structure}
\begin{document}
\begin{abstract}
 A $4$-dimensional Riemannian manifold equipped with a circulant structure, which
is an isometry with respect to the metric and its fourth power is the identity, is considered. The almost product manifold associated with the considered manifold is studied.
The relation between the covariant derivatives of the almost product structure and the circulant structure is obtained.
 The conditions for the covariant derivative of the circulant structure, which imply that an almost product manifold belongs to each of the basic classes of the Staikova-Gribachev classification, are given.
\end{abstract}
\maketitle

\section{Introduction}
The circulant matrices and the circulant structures have application to Vibration analysis, Graph theory, Linear codes, Geometry (for example \cite{13}, \cite{72} and \cite{2}).
Riemannian manifolds equipped with a circulant structure, whose fourth power is the identity were considered in \cite{5} and \cite{1}. In particular case, such manifolds could be Riemannian almost product manifolds.
The systematic development of the theory of Riemannian manifolds $M$ with a metric $g$ and an almost product structure $P$ was started by K.~Yano in \cite{11}. In \cite{nav} A.~M.~Naveira classified the almost product manifolds $(M, P, g)$ with respect to the covariant derivative of $P$.
The Riemannian almost product manifolds $(M, P, g)$ with zero trace of the structure $P$ were classified with respect to the covariant derivative of $P$ by M.~Staikova and K.~Gribachev in \cite{S-G}. The basic classes in this classification are $\mathcal{W}_{1}$, $\mathcal{W}_{2}$ and $\mathcal{W}_{3}$. The class $\mathcal{W}_{0}=\mathcal{W}_{1}\cap \mathcal{W}_{2}\cap W_{3}$ was called the class of Riemannian $P$-manifolds. Our purpose is to obtain characteristic conditions for each of these classes according to the circulant structure.

   In the present paper we consider a $4$-dimensional differentiable manifold $M$ with a Riemannian metric $g$ and a circulant structure $Q$, whose fourth power is the identity and $Q$ acts as an isometry on $g$. This manifold we will denote by $(M, Q, g)$. We study the Riemannian almost product manifold $(M, P, g)$, where $P=Q^{2}$.

   The paper is organized as follows. In Sect. \ref{1}, some necessary facts about considered manifolds $(M, Q, g)$ and $(M, P, g)$ are recalled.
In Sect. \ref{2}, the relation between the covariant derivative of $P$ and the covariant derivative of $Q$ is obtained.

In Sect. \ref{3}, the conditions for the covariant derivative of $Q$, which imply that $(M, P, g)$ belongs to each of the basic classes of the Staikova-Gribachev classification, are given.

\section{Preliminaries}\label{1}
 Let $M$ be a $4$-dimensional Riemannian manifold equipped with a metric $g$ and an endomorphism $Q$ in the tangent space $T_{p}M$ at an arbitrary point $p$ on $M$. Let the coordinates of $Q$ with respect to some basis $\{e_{i}\}$ of $T_{p}M$ form the circulant matrix
   \begin{equation}\label{str-q} Q=\begin{pmatrix}
      0 & 1 & 0 & 0\\
      0 & 0 & 1 & 0 \\
      0 & 0 & 0 & 1\\
      1 & 0 & 0 & 0\\
    \end{pmatrix}.
\end{equation}
Then $Q$ satisfies the equalities
\begin{equation*}
    Q^{4}=\id,\qquad Q^{2}\neq\pm \id.
\end{equation*}
Let the structure $Q$ be compatible with the metric $g$, i.e.
\begin{equation}\label{2.1}
    g(Qx, Qy)=g(x, y).
\end{equation}
Here and anywhere in this work $x, y, z, u$ will stand for arbitrary elements of the algebra of the smooth vector fields on $M$ or vectors in the tangent space $T_{p}M$. The Einstein summation convention is used, the range of the summation indices being always $\{1, 2, 3, 4\}$.

Further, we consider a manifold $(M, Q, g)$ equipped with a metric $g$ and a structure $Q$, which satisfy \eqref{str-q} and \eqref{2.1}.
This manifold is studied in \cite{5} and \cite{1}.

We denote $P = Q^{2}$. In \cite{5} it is noted that the manifold $(M, P, g)$ is a Riemannian manifold with an almost product structure $P$, because $P^{2} = \id $, $P\neq\pm \id$ and $g(Px, Py) = g(x, y)$. Moreover $tr P = 0$. For such manifolds is valid the Staikova-Gribachev classication given in \cite{S-G}.
This classification was made with respect to the tensor $F$ of type $(0,3)$ and the Lee form $\alpha$, which are defined by
\begin{equation}\label{F}
  F(x,y,z)=g((\nabla_{x}P)y,z),\quad \alpha(x)=g^{ij}F(e_{i},e_{j},x).
\end{equation}
Here $\nabla$ is the Levi-Civita connection of $g$, and $g^{ij}$ are the components of the inverse matrix of $g$ with respect to $\{e_{i}\}$.

The basic classes of the Staikova-Gribachev classification are $\mathcal{W}_{1}, \mathcal{W}_{2}$ and $\mathcal{W}_{3}$. Their intersection is the class of Riemannian $P$-manifolds $\mathcal{W}_{0}$.
 A manifold $(M, P, g)$ belongs to each of these classes if it satisfies the following conditions:
\begin{equation}\label{c0}
 \mathcal{W}_{0}:\quad F(x, y, z)=0,
\end{equation}
\begin{equation}\label{c1}
\begin{split}
\mathcal{W}_{1}:\quad F(x,y,z)&=\frac{1}{4}\big((g(x,y)\alpha(z)+g(x,z)\alpha(y)\\&-g(x,Py)\alpha(Pz)-g(x,Pz)\alpha(Py)\big),
\end{split}
\end{equation}
\begin{equation}\label{c2}
\begin{split}
\mathcal{W}_{2}:\quad F(x,y,Pz)+F(y,z,Px)+F(z,x,Py)=0,\quad \alpha(z)=0,
\end{split}
\end{equation}
\begin{equation}\label{c3}
\begin{split}
\mathcal{W}_{3}:\quad F(x,y,z)+F(y,z,x)+F(z,x,y)=0.
\end{split}
\end{equation}

It is well known that $\nabla$ satisfies the equalities:
\begin{equation}\label{f4}
(\nabla_{x}Q)y = \nabla_{x}Qy - Q\nabla_{x}y,
\end{equation}
\begin{equation}\label{fp}
(\nabla_{x}P)y = \nabla_{x}Py - P\nabla_{x}y.
\end{equation}
Let the structure $Q$ of a manifold $(M, Q, g)$ be the covariant constant, i.e. $(\nabla_{x}Q)y=0$. Then, from (\ref{f4}) we obtain successively
\begin{align*}
&\ \nabla_{x}Qy=Q\nabla_{x}y,\quad
\nabla_{x}Q^{2}y=Q\nabla_{x}Qy=Q^{2}\nabla_{x}y,
\end{align*}
thus we get
\begin{equation*}
 \nabla_{x}Py=P\nabla_{x}y.
\end{equation*}
Therefore, from \eqref{fp} it follows
\begin{equation*}
 (\nabla_{x}P)y=0.
\end{equation*}
By using the latter equality and \eqref{F} we find \eqref{c0}. Hence the next theorem is valid.
\begin{theorem}\label{th1.1} If the structure $Q$ of the manifold $(M, Q, g)$ satisfies\\ $\nabla Q=0$, then $(M, P, g)$ belongs to the class $\mathcal{W}_{0}$.
\end{theorem}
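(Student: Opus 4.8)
The plan is to reduce the statement to a direct consequence of the definitions, by showing that the hypothesis $\nabla Q = 0$ forces $\nabla P = 0$, whence the structure tensor $F$ vanishes identically. The starting point is the assumption $(\nabla_{x}Q)y = 0$ for all vector fields $x, y$. Reading this through the product rule \eqref{f4}, namely $(\nabla_{x}Q)y = \nabla_{x}Qy - Q\nabla_{x}y$, it is equivalent to the commutation relation $\nabla_{x}Qy = Q\nabla_{x}y$; that is, the Levi-Civita connection commutes with the endomorphism $Q$.

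First I would exploit that $P = Q^{2}$. Applying the commutation relation twice gives $\nabla_{x}Py = \nabla_{x}Q(Qy) = Q\nabla_{x}Qy = Q^{2}\nabla_{x}y = P\nabla_{x}y$, so that $\nabla$ commutes with $P$ as well. Substituting this into the product rule \eqref{fp} then yields $(\nabla_{x}P)y = \nabla_{x}Py - P\nabla_{x}y = 0$. Finally I would feed this vanishing into the definition \eqref{F} of $F$: since $(\nabla_{x}P)y = 0$, we obtain $F(x,y,z) = g((\nabla_{x}P)y, z) = 0$ for all $x, y, z$. This is precisely the defining condition \eqref{c0} of the class $\mathcal{W}_{0}$, so $(M, P, g)$ belongs to $\mathcal{W}_{0}$.

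As for the main obstacle, there is essentially none beyond bookkeeping, since every step is forced by the definitions and by the two product rules \eqref{f4} and \eqref{fp}. The only point deserving care is the passage from $\nabla Q = 0$ to $\nabla P = 0$, where one must correctly iterate the commutation of $\nabla$ with $Q$ through the composition $Q^{2}$; once this is in hand, the conclusion $F \equiv 0$, and hence membership in $\mathcal{W}_{0}$, is immediate.
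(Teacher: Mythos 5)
Your proposal is correct and follows exactly the paper's own argument: from $\nabla Q=0$ you derive the commutation $\nabla_{x}Qy=Q\nabla_{x}y$, iterate it to get $\nabla_{x}Py=P\nabla_{x}y$, conclude $(\nabla_{x}P)y=0$ via \eqref{fp}, and hence $F\equiv 0$, which is condition \eqref{c0}. Nothing is missing.
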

As it is known the curvature tensor $R$ of $\nabla$ is determined by $$R(x, y)z=\nabla_{x}\nabla_{y}z-\nabla_{y}\nabla_{x}z-\nabla_{[x,y]}z.$$ The corresponding tensor of type $(0, 4)$ is defined as follows
$$R(x, y, z, u)=g(R(x, y)z,u).$$
\begin{prop} \cite{1}
If the structure $Q$ of the manifold $(M, Q, g)$ satisfies $\nabla Q = 0$, then for the curvature tensor $R$ it is valid
\begin{equation*}
 R(x,y,Qz,Qu)=R(x,y,z,u).
 \end{equation*}
 \end{prop}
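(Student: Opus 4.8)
The plan is to establish $R(x,y,Qz,Qu)=R(x,y,z,u)$ by exploiting the hypothesis $\nabla Q=0$ directly in the definition of the curvature tensor. Since $Q$ is covariantly constant, it commutes with covariant differentiation: from \eqref{f4} with $(\nabla_x Q)y=0$ we have $\nabla_x(Qy)=Q(\nabla_x y)$ for every $x,y$. I would record this as the single structural fact driving the whole argument.

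First I would compute $R(x,y)Qz$ using the definition $R(x,y)w=\nabla_x\nabla_y w-\nabla_y\nabla_x w-\nabla_{[x,y]}w$ with $w=Qz$. Pushing $Q$ through each covariant derivative via the commutation relation gives
\begin{align*}
\nabla_x\nabla_y(Qz)&=\nabla_x\bigl(Q\nabla_y z\bigr)=Q\nabla_x\nabla_y z,\\
\nabla_y\nabla_x(Qz)&=Q\nabla_y\nabla_x z,\qquad \nabla_{[x,y]}(Qz)=Q\nabla_{[x,y]}z,
\end{align*}
so that $R(x,y)Qz=Q\bigl(R(x,y)z\bigr)$; that is, $R(x,y)$ commutes with $Q$. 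This is the essential step, and the only one requiring care—everything else is formal.

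Next I would translate this into the $(0,4)$-tensor. Using $R(x,y,Qz,Qu)=g\bigl(R(x,y)Qz,\,Qu\bigr)$ together with the commutation just proved, I get $g\bigl(R(x,y)Qz,Qu\bigr)=g\bigl(Q\,R(x,y)z,\,Qu\bigr)$. Now the compatibility condition \eqref{2.1}, namely $g(Qa,Qb)=g(a,b)$, with $a=R(x,y)z$ and $b=u$, yields $g\bigl(Q\,R(x,y)z,\,Qu\bigr)=g\bigl(R(x,y)z,\,u\bigr)=R(x,y,z,u)$, which is exactly the claim.

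**The main obstacle**, such as it is, lies in the first computation: one must be confident that the covariant-constancy of $Q$ lets it pass through \emph{iterated} covariant derivatives and through $\nabla_{[x,y]}$ uniformly, so that the three terms of the curvature tensor each carry a factor $Q$ in front and reassemble into $Q\,R(x,y)z$. Once $R(x,y)Q=Q\,R(x,y)$ is secured, invoking the isometry property \eqref{2.1} is immediate. I expect the entire proof to be short, with the bulk of the writing devoted to the commutation chain above.
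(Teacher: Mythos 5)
Your proof is correct. Note that the paper itself does not prove this proposition at all --- it is quoted without proof from the cited reference \cite{1} --- so there is no in-paper argument to compare against; your derivation ($\nabla Q=0$ gives $\nabla_x(Qy)=Q\nabla_x y$, hence $R(x,y)Qz=QR(x,y)z$ term by term in the definition of $R$, and then the compatibility $g(Qa,Qb)=g(a,b)$ converts this into $R(x,y,Qz,Qu)=R(x,y,z,u)$) is the standard and almost certainly intended one. Each step checks out, including the careful point you flag: covariant constancy lets $Q$ pass through both iterated derivatives and $\nabla_{[x,y]}$, so all three curvature terms acquire the same left factor of $Q$.
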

We substitute $Qz$ for $z$ and $Qu$ for $u$ in the latter equality, and using Theorem~\ref{th1.1}, we obtain
 \begin{corollary} If the manifold $(M, P, g)$ belongs to $\mathcal{W}_{0}$, then the curvature tensor $R$ satisfies
 \begin{equation*}
 R(x,y,z,u)=R(x,y,Pz,Pu),
 \end{equation*}
 i.e. $R$ is a Riemannian $P$-tensor.
 \end{corollary}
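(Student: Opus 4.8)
The plan is to obtain the conclusion directly from the Proposition by a single substitution, as the sentence preceding the statement suggests. Under $\nabla Q=0$ the Proposition (from \cite{1}) provides
\[ R(x,y,Qz,Qu)=R(x,y,z,u). \]
Since $x,y,z,u$ are arbitrary, the first step is to replace $z$ by $Qz$ and $u$ by $Qu$ throughout. Because $Q^{2}=P$, this turns the left-hand side into $R(x,y,Pz,Pu)$ and the right-hand side into $R(x,y,Qz,Qu)$, which the Proposition again identifies with $R(x,y,z,u)$. Chaining the two identities gives
\[ R(x,y,Pz,Pu)=R(x,y,Qz,Qu)=R(x,y,z,u), \]
i.e. the asserted $P$-invariance of $R$. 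The substitution itself is purely formal, so no real computation is involved in this route.

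The delicate point, and the step I expect to be the main obstacle, is matching the hypotheses. The Proposition is stated for $\nabla Q=0$, while the Corollary is phrased for $(M,P,g)\in\mathcal{W}_{0}$; Theorem~\ref{th1.1} is what connects the two, certifying that the parallelism $\nabla Q=0$ under which the substitution is legitimate indeed places $(M,P,g)$ in $\mathcal{W}_{0}$. Care is needed here because Theorem~\ref{th1.1} only runs from $\nabla Q=0$ to $\mathcal{W}_{0}$, so strictly the substitution argument establishes the conclusion in the standing situation $\nabla Q=0$, with the $\mathcal{W}_{0}$ framing justified through that theorem.

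For this reason I would, in addition, give a proof that uses the stated hypothesis $(M,P,g)\in\mathcal{W}_{0}$ directly. By \eqref{c0} and \eqref{F}, membership in $\mathcal{W}_{0}$ means $F=0$, hence $(\nabla_{x}P)y=0$ and, by \eqref{fp}, $\nabla_{x}(Py)=P\nabla_{x}y$. Substituting this into the definition of $R(x,y)z$ shows that $R(x,y)$ commutes with $P$, that is $R(x,y)Pz=P\,R(x,y)z$; establishing this commutation is the only substantive step. Finally, using that $P$ is a $g$-isometry (which is $g(Px,Py)=g(x,y)$), I would finish with
\[ R(x,y,Pz,Pu)=g\big(R(x,y)Pz,Pu\big)=g\big(P\,R(x,y)z,Pu\big)=g\big(R(x,y)z,u\big)=R(x,y,z,u). \]
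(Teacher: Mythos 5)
Your first route is exactly the paper's proof: substitute $Qz$ for $z$ and $Qu$ for $u$ in the Proposition's identity, use $Q^{2}=P$, and invoke Theorem~\ref{th1.1} to phrase the conclusion in terms of $\mathcal{W}_{0}$. You are also right to flag the hypothesis mismatch: the Proposition and the substitution argument live under the standing assumption $\nabla Q=0$, whereas the Corollary is stated with the hypothesis $(M,P,g)\in\mathcal{W}_{0}$, and Theorem~\ref{th1.1} only supplies the implication $\nabla Q=0\Rightarrow\mathcal{W}_{0}$, not its converse; as written, the paper's Corollary is really a restatement of the Proposition's conclusion in the $\mathcal{W}_{0}$ language rather than a deduction from $\mathcal{W}_{0}$ alone. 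Your second argument repairs this cleanly and differs genuinely from the paper: from \eqref{c0} and \eqref{F} one gets $\nabla P=0$, hence by \eqref{fp} the curvature operator $R(x,y)$ commutes with $P$, and the $P$-compatibility $g(Px,Py)=g(x,y)$ then yields $R(x,y,Pz,Pu)=R(x,y,z,u)$ directly. This buys a proof that uses only the stated hypothesis and does not pass through the circulant structure $Q$ at all, at the modest cost of redoing for $P$ the commutation computation that the cited Proposition performs for $Q$. Both computations you carry out are correct.
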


\section{Relation between $F$ and $\bar{F}$}\label{2}

 We consider manifolds $(M, Q, g)$ and $(M, P, g)$, where $P=Q^{2}$. We define a tensor $\bar{F}$ of type $(0,3)$, as follows
\begin{equation}\label{f5}
\bar{F}(x,y,z)=g((\nabla_{x}Q)y,z),\quad \overline{\alpha}(x)=g^{ij}\bar{F}(e_{i},e_{j},x),
\end{equation}
where $\alpha$ is the Lee form associated to $\bar{F}$.
\begin{theorem}\label{th1} For the tensors $F$  on the manifold $(M,P,g)$ and $\bar{F}$ on the manifold $(M,Q,g)$ the following equalities are valid:
\begin{equation}\label{f14}
\bar{F}(x,y,z)+\bar{F}(x,Qy,Qz)=F(x,y,Qz),
\end{equation}
\begin{equation}\label{f15}
\bar{F}(x,y,Q^{3}z)+\bar{F}(x,Qy,z)=F(x,y,z).
\end{equation}
\end{theorem}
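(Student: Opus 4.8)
The plan is to reduce both identities to a single structural relation coming from $P=Q^2$, and then obtain one from the other by substitution. The starting point is a Leibniz-type formula for the covariant derivative of $P$ in terms of that of $Q$. Using \eqref{f4} and \eqref{fp}, I would verify the operator identity
\begin{equation*}
(\nabla_{x}P)y=(\nabla_{x}Q)(Qy)+Q\big((\nabla_{x}Q)y\big),
\end{equation*}
which follows by writing $(\nabla_{x}P)y=\nabla_{x}Q^{2}y-Q^{2}\nabla_{x}y$ via \eqref{fp} and inserting the telescoping term $\pm Q\nabla_{x}Qy$. This is the one genuinely structural step; everything afterward is substitution together with the compatibility \eqref{2.1}.

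Next I would take the $g$-inner product of this identity with $Qz$ and expand, using the definition \eqref{f5} of $\bar{F}$. The first term gives $g((\nabla_{x}Q)(Qy),Qz)=\bar{F}(x,Qy,Qz)$ directly from \eqref{f5}. For the second term, the isometry property \eqref{2.1} in the form $g(Qu,Qz)=g(u,z)$ removes both factors of $Q$ simultaneously, yielding $g(Q((\nabla_{x}Q)y),Qz)=g((\nabla_{x}Q)y,z)=\bar{F}(x,y,z)$. Adding the two contributions produces exactly \eqref{f14}.

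Finally, to obtain \eqref{f15} I would substitute $Q^{3}z$ for $z$ in \eqref{f14}. Since $Q^{4}=\id$, the term $\bar{F}(x,Qy,Q(Q^{3}z))$ collapses to $\bar{F}(x,Qy,z)$ and the right-hand side $F(x,y,Q(Q^{3}z))$ becomes $F(x,y,z)$, leaving $\bar{F}(x,y,Q^{3}z)+\bar{F}(x,Qy,z)=F(x,y,z)$, which is \eqref{f15}.

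I do not anticipate a serious obstacle. The step most prone to error is the Leibniz identity, where a sign or the ordering of the two $Q$-factors could slip, so I would write it out in full before proceeding. The only other delicate point is the bookkeeping of the powers of $Q$ in the substitution leading to \eqref{f15}, where $Q^{4}=\id$ must be applied on both sides consistently.
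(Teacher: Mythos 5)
Your proposal is correct and follows essentially the same route as the paper: your operator identity $(\nabla_{x}P)y=(\nabla_{x}Q)(Qy)+Q((\nabla_{x}Q)y)$ is precisely the telescoping step the paper performs inside the inner products when it adds its expansions of $\bar{F}(x,y,z)$ and $\bar{F}(x,Qy,Qz)$ and cancels the $g(\nabla_{x}Qy,z)$ terms, and the passage to \eqref{f15} by substituting $Q^{3}z$ and using $Q^{4}=\id$ is identical. Stating the Leibniz identity at the operator level before contracting with $Qz$ is a slightly cleaner presentation, but it is the same argument.
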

\begin{proof}
From \eqref{F} and \eqref{fp}, due to $P=Q^{2}$, we get
\begin{equation*}
 F(x,y,z)=g(\nabla_{x}Py-P\nabla_{x}y,z)=g(\nabla_{x}Q^{2}y-Q^{2}\nabla_{x}y,z),
\end{equation*}
i.e.
\begin{equation*}
F(x,y,z)=g(\nabla_{x}Q^{2}y,z)-g(Q^{2}\nabla_{x}y,z).
\end{equation*}
Then
\begin{equation*}
F(x,y,Qz)=g(\nabla_{x}Q^{2}y,Qz)-g(Q^{2}\nabla_{x}y,Qz),
\end{equation*}
from which, because of \eqref{2.1} we have
\begin{equation}\label{f12}
F(x,y,Qz)=g(\nabla_{x}Q^{2}y,Qz)-g(Q\nabla_{x}y,z).
\end{equation}
From \eqref{f4} and \eqref{f5} we obtain
\begin{equation}\label{14}
\bar{F}(x,y,z)=g(\nabla_{x}Qy,z)-g(Q\nabla_{x}y,z),
\end{equation}
and consequently
\begin{equation}\label{15}
\bar{F}(x,Qy,Qz)=g(\nabla_{x}Q^{2}y,Qz)-g(\nabla_{x}Qy,z).
\end{equation}
Taking the sum of \eqref{14} and \eqref{15} we get
\begin{equation*}
\bar{F}(x,y,z)+\bar{F}(x,Qy,Qz)=g(\nabla_{x}Q^{2}y,Qz)-g(Q\nabla_{x}y,z).
\end{equation*}
Then, having in mind (\ref{f12}), we find \eqref{f14}.
Now we substitute $Q^{3}z$ for $z$ into (\ref{f14}) and using \eqref{2.1}, we find \eqref{f15}.
\end{proof}
Hence the next theorem is valid.
\begin{theorem} The manifold $(M, P, g)$  belongs to $\mathcal{W}_{0}$ if and only if $Q$ satisfies
\begin{equation}\label{fs}
(\nabla_{x}Q)Qy=-Q(\nabla_{x}Q)y.
\end{equation}
\end{theorem}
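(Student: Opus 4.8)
The plan is to translate the pointwise structural identity \eqref{fs} into an identity among values of $\bar F$, and then to recognize the resulting expression as the left-hand side of \eqref{f15}. First I would recall that, by the defining condition \eqref{c0}, the manifold $(M,P,g)$ lies in $\mathcal{W}_0$ exactly when $F(x,y,z)=0$ for all $x,y,z$; thus it suffices to prove that \eqref{fs} holds if and only if $F$ vanishes identically.

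Next I would rewrite both sides of \eqref{fs} as values of $\bar F$. Pairing \eqref{fs} with an arbitrary vector field $z$ by means of $g$, the left-hand side becomes $g((\nabla_{x}Q)Qy,z)=\bar F(x,Qy,z)$ directly from the definition \eqref{f5}. For the right-hand side I would use that $Q$ is an isometry: replacing $a$ by $w$ and $b$ by $Q^{-1}z$ in \eqref{2.1} yields $g(Qw,z)=g(w,Q^{-1}z)$, and since $Q^{4}=\id$ we have $Q^{-1}=Q^{3}$. Hence, with $w=(\nabla_{x}Q)y$,
\[
g\big(-Q(\nabla_{x}Q)y,z\big)=-g\big((\nabla_{x}Q)y,Q^{3}z\big)=-\bar F(x,y,Q^{3}z).
\]
Because $g$ is nondegenerate, the vector identity \eqref{fs} (holding for all $x,y$) is therefore equivalent to the scalar identity
\[
\bar F(x,Qy,z)+\bar F(x,y,Q^{3}z)=0\quad\text{for all } x,y,z.
\]

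Finally I would invoke \eqref{f15}, which asserts precisely that the left-hand side of the last display equals $F(x,y,z)$. Consequently \eqref{fs} holds for all $x,y$ if and only if $F(x,y,z)=0$ for all $x,y,z$, which by \eqref{c0} is equivalent to $(M,P,g)\in\mathcal{W}_0$, completing the equivalence.

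I expect the only delicate point to be the correct computation of the $g$-adjoint of $Q$: one must combine the compatibility \eqref{2.1} with $Q^{4}=\id$ to see that the adjoint of $Q$ is $Q^{3}$, so that the term $-Q(\nabla_{x}Q)y$ pairs with $z$ to produce $-\bar F(x,y,Q^{3}z)$, matching exactly the second summand of \eqref{f15}. Once this adjoint is handled, the equivalence follows immediately from \eqref{f15} and \eqref{c0}.
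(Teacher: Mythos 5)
Your proof is correct and follows essentially the same route as the paper: both arguments convert \eqref{fs} into the vanishing of the $\bar F$-expression appearing in Theorem~\ref{th1} by pairing with a vector field via $g$ and using that the $g$-adjoint of $Q$ is $Q^{3}$. The only cosmetic difference is that you pair with $z$ and invoke \eqref{f15}, while the paper pairs with $Qz$ and invokes \eqref{f14}; these are interchangeable since $Q$ is invertible.
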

\begin{proof}
Let $(M, P, g) \in \mathcal{W}_{0}$, i.e. $F = 0$. Then, due to (\ref{f14}), it follows:
\begin{equation*}
\bar{F}(x,Qy,Qz)=-\bar{F}(x,y,z).
\end{equation*}
The latter equality  and (\ref{th1}) imply (\ref{fs}).\\
Vice versa. According to (\ref{f5}) and (\ref{fs}) we find
\begin{equation*}
\bar{F}(x,y,z)+\bar{F}(x,Qy,Qz)=0.
\end{equation*}
Then, due to (\ref{f14}), it follows $F=0$, i.e. $(M, P, g) \in \mathcal{W}_{0}$.
\end{proof}

\section{Properties of $\bar{F}$}\label{3}
\begin{theorem}
For the tensor $\bar{F}$ on $(M, Q, g)$ the following equalities are valid:
\begin{equation}\label{f18}
\bar{F}(x,y,Q^{3}z)+ \bar{F}(x,Qy,z)=\bar{F}(x,z,Q^{3}y)+ \bar{F}(x,Qz,y),
\end{equation}
\begin{equation}\label{f19}
\bar{F}(x,y,z)+ \bar{F}(x,Qy,Qz)+\bar{F}(x,Q^{2}y,Q^{2}z)+ \bar{F}(x,Q^{3}y,Q^{3}z)=0,
\end{equation}
\begin{equation}\label{f20}
\bar{F}(x,y,Qz)=-\bar{F}(x,z,Qy),
\end{equation}
\begin{equation}\label{f21}
\bar{F}(x,y,Q^{3}z)=-\bar{F}(x,Q^{2}z,Qy).
\end{equation}
\end{theorem}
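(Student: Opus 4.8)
The plan is to establish the four identities in the order \eqref{f20}, \eqref{f21}, \eqref{f19}, \eqref{f18}, since several of them share the same ingredients or follow from one another. The two structural facts I will use throughout are that $Q$ is a $g$-isometry, so its $g$-adjoint is $Q^{-1}=Q^{3}$ (that is, $g(Qx,y)=g(x,Q^{3}y)$), and that $Q^{4}=\id$ forces $\nabla_{x}(Q^{4})=0$.

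First I would prove \eqref{f20}. Applying $\nabla_{x}$ to the compatibility condition \eqref{2.1} written as $g(Qy,Qz)=g(y,z)$ and using that $\nabla$ is metric, I expand $\nabla_{x}(Qy)=(\nabla_{x}Q)y+Q\nabla_{x}y$; then the terms $g(Q\nabla_{x}y,Qz)$ and $g(Qy,Q\nabla_{x}z)$ cancel against the right-hand side by \eqref{2.1}, leaving $g((\nabla_{x}Q)y,Qz)+g(Qy,(\nabla_{x}Q)z)=0$. By \eqref{f5} and the symmetry of $g$ this is exactly $\bar{F}(x,y,Qz)+\bar{F}(x,z,Qy)=0$, which is \eqref{f20}. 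Then \eqref{f21} is immediate: substituting $z\mapsto Q^{2}z$ in \eqref{f20} and using $Q\cdot Q^{2}=Q^{3}$ gives $\bar{F}(x,y,Q^{3}z)=-\bar{F}(x,Q^{2}z,Qy)$.

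For \eqref{f19} I would move all of $Q$ off the third slot using the adjoint: $\bar{F}(x,Q^{k}y,Q^{k}z)=g((\nabla_{x}Q)Q^{k}y,Q^{k}z)=g\big(Q^{-k}(\nabla_{x}Q)Q^{k}y,z\big)$. Summing over $k=0,1,2,3$ reduces \eqref{f19} to the operator identity $\sum_{k=0}^{3}Q^{-k}(\nabla_{x}Q)Q^{k}=0$. This follows from $\nabla_{x}(Q^{4})=0$: expanding by the Leibniz rule, $\nabla_{x}(Q^{4})=\sum_{j=0}^{3}Q^{j}(\nabla_{x}Q)Q^{3-j}=0$, and multiplying on the right by $Q$ turns $Q^{3-j}$ into $Q^{-j}$, giving $\sum_{j=0}^{3}Q^{j}(\nabla_{x}Q)Q^{-j}=0$, which is the same collection of four terms as $\sum_{k}Q^{-k}(\nabla_{x}Q)Q^{k}$. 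I expect this to be the main obstacle, since it is the one place where the order-four condition $Q^{4}=\id$ enters essentially, rather than just the isometry property.

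Finally, \eqref{f18} I would reduce to a symmetry of $F$. By \eqref{f15} the left-hand side of \eqref{f18} equals $F(x,y,z)$, and applying \eqref{f15} with $y$ and $z$ interchanged shows the right-hand side equals $F(x,z,y)$; hence \eqref{f18} is equivalent to $F(x,y,z)=F(x,z,y)$. This last symmetry comes from the fact that $P=Q^{2}$ is $g$-symmetric: differentiating $g(Py,z)=g(y,Pz)$ and cancelling the $g(P\nabla_{x}y,z)$ and $g(Py,\nabla_{x}z)$ terms against their partners leaves $g((\nabla_{x}P)y,z)=g((\nabla_{x}P)z,y)$, i.e. $F(x,y,z)=F(x,z,y)$. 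With this, \eqref{f18} follows at once from \eqref{f15}.
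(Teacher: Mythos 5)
Your proposal is correct, and for \eqref{f20}, \eqref{f21} and \eqref{f18} it runs essentially parallel to the paper: the paper also gets \eqref{f20} by expanding $\bar{F}(x,y,Qz)$ through metric compatibility of $\nabla$ and the isometry condition \eqref{2.1} (your differentiation of $g(Qy,Qz)=g(y,z)$ is the same computation repackaged), gets \eqref{f21} by the substitution $z\mapsto Q^{2}z$ in \eqref{f20}, and gets \eqref{f18} from \eqref{f15} together with the symmetry $F(x,y,z)=F(x,z,y)$, which the paper cites as known and you reprove from the $g$-symmetry of $P$. The genuine divergence is in \eqref{f19}. The paper deduces it from \eqref{f15} combined with the standard almost-product identity $F(x,Py,Pz)=-F(x,y,z)$ and then substitutes $Qz$ for $z$; you instead work entirely at the operator level, using the adjoint relation $g(w,Q^{k}z)=g(Q^{-k}w,z)$ to reduce \eqref{f19} to $\sum_{k=0}^{3}Q^{-k}(\nabla_{x}Q)Q^{k}=0$, which you extract from the Leibniz expansion of $\nabla_{x}(Q^{4})=0$ followed by right multiplication by $Q$ (the relabelling $\sum_{j}Q^{j}(\nabla_{x}Q)Q^{-j}=\sum_{k}Q^{-k}(\nabla_{x}Q)Q^{k}$ is legitimate since $j$ runs over a full residue system modulo $4$). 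Both arguments are valid. Yours is self-contained and makes transparent exactly where the order-four condition $Q^{4}=\id$ enters, never invoking $P$ or $F$; the paper's is shorter because it leans on the bridge formula \eqref{f15} and on properties of $F$ inherited from the almost product structure, at the cost of routing a statement purely about $Q$ through the auxiliary tensor $F$.
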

\begin{proof}
It is known that the tensor $F$ determined by \eqref{F} has the properties:
\begin{equation}\label{f16}
F(x,y,z)=F(x,z,y),
\end{equation}
\begin{equation}\label{f17}
F(x,Py,Pz)=-F(x,y,z).
\end{equation}
Equalities (\ref{f15}) and (\ref{f16}) imply (\ref{f18}).\\
From (\ref{f15}) and (\ref{f17}) we get
\begin{equation*}
\bar{F}(x,Qy,z)+ \bar{F}(x,y,Q^{3}z)+\bar{F}(x,Q^{3}y,Q^{2}z)+ \bar{F}(x,Q^{2}y,Qz)=0.
\end{equation*}
In the latter equality we substitute $Qz$ for $z$, and we obtain (\ref{f19}).\\
Further, we substitute $Qz$ for $z$ into (\ref{14}) and we have
\begin{align*}
&\ \bar{F}(x,y,Qz)=g(\nabla_{x}Qy,Qz)-g(\nabla_{x}y,z) \\
&\ =xg(Qy,Qz)-g(Qy,\nabla_{x}Qz)-xg(y,z)+g(y,\nabla_{x}z)\\
&\ =-g(\nabla_{x}Qz,Qy)+g(\nabla_{x}z,y) =-g(\nabla_{x}Qz,Qy)+g(Q\nabla_{x}z,Qy) \\
&\ =-g(\nabla_{x}Qz-Q\nabla_{x}z,Qy)=-\bar{F}(x,z,Qy).
\end{align*}
Therefore we obtain (\ref{f20}).
From (\ref{f20}) directly follows (\ref{f21}).
\end{proof}
Using  (\ref{c1}), \eqref{f5} and (\ref{f14}) we get the following
\begin{theorem} The manifold $(M, P, g)$ belongs to $\mathcal{W}_{1}$ if and only if the tensor $\bar{F}$ on $(M, Q, g)$ satisfies the following conditions:
\begin{align*}
 \bar{F}(x,y,Q^{3}z)+\bar{F}(x,Qy,z)&\ =\frac{1}{4}\big(g(x,y)\alpha(z)+g(x,z)\alpha(y)\\
&\ +g(x,Q^{2}y)\alpha(Q^{2}z)+g(x,Q^{2}z)\alpha(Q^{2}y)\big),
\end{align*}
\begin{equation*}
   \overline{\alpha}(Q^{3}z)+g^{ij}\bar{F}(e_{i}, Qe_{j},z)=\alpha(z).\quad \Box
   \end{equation*}
\end{theorem}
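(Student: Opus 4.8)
The plan is to read off both asserted identities directly from the bridge relation \eqref{f15}, namely $\bar{F}(x,y,Q^{3}z)+\bar{F}(x,Qy,z)=F(x,y,z)$, together with the defining relation \eqref{c1} of the class $\mathcal{W}_{1}$. The key observation is that the left-hand side of the first asserted equality is, by \eqref{f15}, nothing but $F(x,y,z)$; so the first condition is merely a transcription of \eqref{c1} into the language of $\bar{F}$, while the second condition is the $g^{ij}$-trace of \eqref{f15}.

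First I would establish the forward implication. Assume $(M,P,g)\in\mathcal{W}_{1}$, so that $F$ satisfies \eqref{c1}. Using \eqref{f15} I would replace the left-hand side $F(x,y,z)$ of \eqref{c1} by $\bar{F}(x,y,Q^{3}z)+\bar{F}(x,Qy,z)$, and on the right-hand side I would substitute $P=Q^{2}$, so that $Py=Q^{2}y$, $Pz=Q^{2}z$, $\alpha(Py)=\alpha(Q^{2}y)$ and $\alpha(Pz)=\alpha(Q^{2}z)$. This produces the first displayed equality, with the coefficients of the last two terms inherited verbatim from \eqref{c1}. For the second equality I would take \eqref{f15}, set $x=e_{i}$ and $y=e_{j}$, and contract with $g^{ij}$. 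By the definition \eqref{f5} of $\overline{\alpha}$ the term $g^{ij}\bar{F}(e_{i},e_{j},Q^{3}z)$ is exactly $\overline{\alpha}(Q^{3}z)$, the middle term $g^{ij}\bar{F}(e_{i},Qe_{j},z)$ admits no further simplification, and by \eqref{F} the right-hand side $g^{ij}F(e_{i},e_{j},z)$ is $\alpha(z)$; this is precisely the second asserted equality.

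The converse is the same computation run backwards. Given the two conditions, \eqref{f15} immediately rewrites the left-hand side of the first condition as $F(x,y,z)$, and reversing the substitution $P=Q^{2}$ turns its right-hand side back into that of \eqref{c1}; hence $F$ satisfies \eqref{c1} and $(M,P,g)\in\mathcal{W}_{1}$. It is worth noting that the second condition is in fact an identity valid on every $(M,Q,g)$ — it is simply the $g^{ij}$-trace of \eqref{f15} — so that the genuine class-membership content resides entirely in the first equality; the role of the second equality is to carry the Lee form $\alpha$ of $F$ over to an expression built from $\bar{F}$ and $\overline{\alpha}$.

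I do not expect a substantive obstacle here: once \eqref{f15} is available, the argument is linear bookkeeping. The only points requiring care are purely computational, namely keeping the metric contraction in the correct pair of slots when passing to the Lee forms, recognizing that $g^{ij}\bar{F}(e_{i},Qe_{j},z)$ is a genuinely new contraction and not itself a Lee form, and transcribing the signs of the $P=Q^{2}$ terms on the right of \eqref{c1} faithfully. None of these is deep, so the proof reduces to the two substitutions described above.
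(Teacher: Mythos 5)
Your approach is exactly the paper's: the paper's entire proof of this theorem is the one-line citation of \eqref{c1}, \eqref{f5} and \eqref{f14}, and your two steps --- rewriting $F(x,y,z)$ as $\bar{F}(x,y,Q^{3}z)+\bar{F}(x,Qy,z)$ via \eqref{f15} (itself a consequence of \eqref{f14}), then tracing \eqref{f15} with $g^{ij}$ to convert $\alpha(z)$ into $\overline{\alpha}(Q^{3}z)+g^{ij}\bar{F}(e_{i},Qe_{j},z)$ --- are precisely the intended argument. Your remark that the second condition is an identity valid on every $(M,Q,g)$, so that the class-membership content lives entirely in the first equality, is correct and is in fact more information than the paper provides.

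There is, however, one point where your proof does not deliver the statement as printed, and you pass over it by asserting that the coefficients of the last two terms are ``inherited verbatim'' from \eqref{c1}. In \eqref{c1} those terms appear with minus signs, $-g(x,Py)\alpha(Pz)-g(x,Pz)\alpha(Py)$, and the substitution $P=Q^{2}$ changes nothing but notation, so your computation yields
\begin{equation*}
\bar{F}(x,y,Q^{3}z)+\bar{F}(x,Qy,z)=\tfrac{1}{4}\bigl(g(x,y)\alpha(z)+g(x,z)\alpha(y)-g(x,Q^{2}y)\alpha(Q^{2}z)-g(x,Q^{2}z)\alpha(Q^{2}y)\bigr),
\end{equation*}
whereas the theorem displays the last two terms with $+$. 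No identity available in the paper can flip that sign: \eqref{2.1} only gives $g(x,Q^{2}y)=g(Q^{2}x,y)$. So either the theorem (or \eqref{c1}) carries a sign misprint, or the transcription is wrong; since you explicitly name ``transcribing the signs of the $P=Q^{2}$ terms faithfully'' as the one delicate point, you should have noticed that faithful transcription contradicts the printed statement rather than reproducing it. Apart from flagging and resolving this sign discrepancy, the proof is complete and matches the paper's route.
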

We apply (\ref{f14}) and (\ref{f15}) into (\ref{c2}) and obtain
\begin{equation*}
F(x,y,Pz)=\bar{F}(x,y,Qz)+\bar{F}(x,Qy,Q^{2}z).
\end{equation*}
Therefore we arrive at the following
\begin{theorem} The manifold $(M, P, g)$ belongs to $\mathcal{W}_{2}$ if and only if the tensor $\bar{F}$ on $(M, Q, g)$ satisfies the following condition:
\begin{equation*}
\begin{split}
\bar{F}(x,y,Qz)+\bar{F}(x,Qy,Q^{2}z)+\bar{F}(y,z,Qx)+\bar{F}(y,Qz,Q^{2}x)\\+\bar{F}(z,x,Qy)+\bar{F}(z,Qx,Q^{2}y)=0.\quad \Box
\end{split}
\end{equation*}
\end{theorem}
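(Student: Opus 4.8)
The plan is to make the whole argument rest on the identity
\[
F(x,y,Pz)=\bar{F}(x,y,Qz)+\bar{F}(x,Qy,Q^{2}z),
\]
displayed immediately before the statement, which comes from \eqref{f14} by replacing $z$ with $Qz$ and using $P=Q^{2}$. First I would record this identity as the single computational input: it already expresses every occurrence of $F$ evaluated with $P$ in the third slot as a sum of two $\bar{F}$-terms, so the remainder of the proof is pure term-by-term rewriting and needs no new metric manipulation.

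Next I would feed this identity into the cyclic condition of \eqref{c2}. Permuting $x,y,z$ cyclically in the identity gives
\[
F(y,z,Px)=\bar{F}(y,z,Qx)+\bar{F}(y,Qz,Q^{2}x),\qquad
F(z,x,Py)=\bar{F}(z,x,Qy)+\bar{F}(z,Qx,Q^{2}y),
\]
so that summing the three relations turns the left-hand side of the first equation in \eqref{c2} into exactly the six-term $\bar{F}$-expression in the statement. Reading this resulting equality in both directions then yields the equivalence between the vanishing of the cyclic sum of $F$ and the displayed condition on $\bar{F}$; no use of \eqref{f16} or \eqref{f17} is needed for this half, which is the routine part.

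The step I expect to be the real obstacle is the second clause of \eqref{c2}, namely $\alpha(z)=0$, which the six-term condition does not visibly encode and which must therefore be tracked explicitly. To see whether it is already forced, I would contract the cyclic identity with $g^{ij}$ over a pair of arguments and simplify using \eqref{f16} together with the relation $\alpha(Pz)=-g^{ij}F(e_{i},Pe_{j},z)$ that follows from \eqref{f17}; I expect the three traced terms to collapse to $\alpha(Pz)-\alpha(Pz)+0$, i.e.\ to a tautology, so that $\alpha(z)=0$ is \emph{not} a consequence of the cyclic sum. The careful point is then to translate $\alpha(z)=0$ into $\bar{F}$-language on its own, via \eqref{f5} and the link between $\alpha$ and $\overline{\alpha}$ already used in the $\mathcal{W}_{1}$ statement, and to append the resulting trace condition to the displayed equivalence. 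Getting this Lee-form clause right, rather than the mechanical substitution, is where the attention is needed.
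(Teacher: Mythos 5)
Your main computation coincides with the paper's: the paper likewise substitutes $Qz$ for $z$ in \eqref{f14} to get $F(x,y,Pz)=\bar{F}(x,y,Qz)+\bar{F}(x,Qy,Q^{2}z)$ and sums this cyclically over $x,y,z$ to convert the first condition of \eqref{c2} into the displayed six-term identity; that step is reversible, so this half of your argument matches the paper exactly. Where you go beyond the paper is the Lee-form clause, and you are right to insist on it. Your trace check is correct: contracting the cyclic sum over $x=e_{i}$, $y=e_{j}$ with $g^{ij}$ and using \eqref{f16} together with $F(x,Py,z)=-F(x,y,Pz)$ (a consequence of \eqref{f17}) collapses the three terms to $\alpha(Pz)-\alpha(Pz)+0=0$, a tautology, so $\alpha(z)=0$ is genuinely independent of the six-term condition and must be appended separately, translated via \eqref{f5} and \eqref{f15} into $\overline{\alpha}(Q^{3}z)+g^{ij}\bar{F}(e_{i},Qe_{j},z)=0$, in parallel with the second condition in the $\mathcal{W}_{1}$ theorem. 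The paper's statement and proof silently drop this clause, so as printed the displayed condition characterizes only the cyclic identity in \eqref{c2} and not membership in $\mathcal{W}_{2}$ as the paper itself defines it; your more careful version is the one that actually yields the claimed equivalence.
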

We apply (\ref{f15}) into (\ref{c3}) and we have
\begin{theorem} The manifold $(M, P, g)$ belongs to $\mathcal{W}_{3}$ if and only if the tensor $\bar{F}$ on $(M, Q, g)$ satisfies the following condition:
\begin{equation*}
\begin{split}
\bar{F}(x,y,Q^{3}z)+\bar{F}(x,Qy,z)+\bar{F}(y,z,Q^{3}x)+\bar{F}(y,Qz,x)\\+\bar{F}(z,x,Q^{3}y)+\bar{F}(z,Qx,Qy)=0.\quad \Box
\end{split}
\end{equation*}
\end{theorem}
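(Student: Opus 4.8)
The plan is to read off $\mathcal{W}_{3}$-membership directly from its defining condition \eqref{c3} together with the identity \eqref{f15} already established in Theorem~\ref{th1}, which rewrites every value of $F$ as a sum of two values of $\bar F$. Because \eqref{f15} holds for \emph{all} vector fields $x,y,z$, it is a pointwise linear identity, so the correspondence $F\leftrightarrow\bar F$ it encodes is reversible; this reversibility is precisely what turns the conclusion into an \emph{if and only if} rather than a one-way implication, so I will not need to argue the two directions separately.

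First I would recall that, by \eqref{c3}, the manifold $(M,P,g)$ lies in $\mathcal{W}_{3}$ exactly when the cyclic sum $F(x,y,z)+F(y,z,x)+F(z,x,y)$ vanishes identically. Next I would apply \eqref{f15} to each of the three cyclic terms, permuting the arguments in the order $(x,y,z)\to(y,z,x)\to(z,x,y)$:
\begin{align*}
F(x,y,z)&=\bar{F}(x,y,Q^{3}z)+\bar{F}(x,Qy,z),\\
F(y,z,x)&=\bar{F}(y,z,Q^{3}x)+\bar{F}(y,Qz,x),\\
F(z,x,y)&=\bar{F}(z,x,Q^{3}y)+\bar{F}(z,Qx,y).
\end{align*}
Summing these three lines expresses the $\mathcal{W}_{3}$ cyclic sum of $F$ as the six-term combination of $\bar F$ displayed in the statement, so the vanishing of the former is equivalent to the vanishing of the latter, which establishes the claim.

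The only delicate point is bookkeeping: since \eqref{f15} treats its three arguments asymmetrically, I must track carefully which slot receives the factor $Q$ and which receives $Q^{3}$ under each cyclic permutation. There is no analytic obstacle here — no curvature identity is needed, and no further use of the compatibility \eqref{2.1} beyond what already entered the derivation of \eqref{f15} — so once the termwise substitution is carried out the result follows at once, and the converse is obtained merely by reading the same chain of equalities in reverse.
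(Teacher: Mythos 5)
Your proposal is correct and is precisely the paper's own (one-line) argument: substitute \eqref{f15} into each term of the cyclic sum in \eqref{c3}, the equivalence following because \eqref{f15} is an identity valid for all $x,y,z$. Note, however, that your substitution correctly produces $\bar{F}(z,Qx,y)$ as the sixth term, while the theorem as printed shows $\bar{F}(z,Qx,Qy)$ --- the two are not obviously equal, so this looks like a misprint in the stated condition, and you should flag the discrepancy rather than assert that your sum coincides verbatim with the displayed formula.
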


\end{document}